\theoremstyle{plain}
\newtheorem{thm}{Theorem}[section]
\newtheorem{cor}[thm]{Corollary}
\newtheorem{lem}[thm]{Lemma}
\theoremstyle{definition}
\newtheorem{defn}[thm]{Definition}
\newtheorem{example}[thm]{Example}
\theoremstyle{remark}
\begin{document}

\title[Asymptotically equicontinuous sequences of operators]{Asymptotically equicontinuous sequences of operators and a Banach--Steinhaus type theorem}

\author{Javad Mashreghi}
\address{D\'epartement de math\'ematiques et de statistique, Universit\'e Laval,
Qu\'ebec City (Qu\'ebec),  Canada G1V 0A6.}
\email{javad.mashreghi@mat.ulaval.ca}

\author{Thomas Ransford}
\address{D\'epartement de math\'ematiques et de statistique, Universit\'e Laval,
Qu\'ebec City (Qu\'ebec),  Canada G1V 0A6.}
\email[Corresponding author]{thomas.ransford@mat.ulaval.ca}

\thanks{Mashreghi supported by an NSERC Discovery Grant. Ransford  supported by grants from NSERC and the Canada Research Chairs program.}

\begin{abstract}  
We introduce the notion of an asymptotically equicontinuous sequence of linear operators, 
and use it to prove the following result. 
If $X,Y$ are topological vector spaces,  
if $T_n,T:X\to Y$ are continuous linear maps, 
and if $D$ is a dense subset of $X$, 
then the following statements are equivalent: 
(i) $T_nx\to Tx$ for all $x\in X$, and 
(ii) $T_n x\to Tx$ for all $x\in D$ and the sequence $(T_n)$ is asymptotically equicontinuous.
\end{abstract}

\maketitle

\section{\bf Introduction}\label{S:intro}

The following theorem is a standard exercise in  functional analysis.

\begin{thm}\label{T:norm}
Let $X$ and $Y$ be Banach spaces, let $(T_n)_{n\ge1}$ and $T$
be continuous linear maps from $X$ to $Y$, and let $D$ be a dense subset of~$X$. 
Then the following statements are equivalent:
\begin{enumerate}
\item[(i)] $T_nx\to Tx$ for all $x\in X$;
\item[(ii)] $T_nx\to Tx$ for all $x\in D$ and $\sup_n\|T_n\|<\infty$.
\end{enumerate}
\end{thm}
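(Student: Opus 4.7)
The plan is to prove the two implications separately, with the forward direction (i)$\Rightarrow$(ii) relying on the uniform boundedness principle and the reverse (ii)$\Rightarrow$(i) being a standard $3\epsilon$-argument.

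For (i)$\Rightarrow$(ii), the convergence $T_n x \to Tx$ on $D$ is immediate. To obtain $\sup_n\|T_n\|<\infty$, I would observe that for each fixed $x\in X$ the convergent sequence $(T_n x)$ is bounded in $Y$, so $\sup_n\|T_n x\|<\infty$ for every $x\in X$. Since $X$ is a Banach space, the Banach--Steinhaus (uniform boundedness) theorem then gives $\sup_n\|T_n\|<\infty$. This is where completeness of $X$ enters essentially, via the Baire category theorem hidden in Banach--Steinhaus; I expect this to be the main (and only) nontrivial ingredient.

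For (ii)$\Rightarrow$(i), set $M:=\sup_n\|T_n\|$, which is finite by hypothesis, and note that $\|T\|<\infty$ since $T$ is continuous. Given $x\in X$ and $\varepsilon>0$, I would pick $y\in D$ with $\|x-y\|<\varepsilon/(2(M+\|T\|+1))$ and then split
\[
\|T_nx-Tx\|\le \|T_n(x-y)\|+\|T_ny-Ty\|+\|T(y-x)\|.
\]
The first and third terms are bounded by $(M+\|T\|)\|x-y\|<\varepsilon/2$ uniformly in $n$, while the middle term is less than $\varepsilon/2$ for all sufficiently large $n$ by the hypothesis on $D$. Hence $\|T_nx-Tx\|<\varepsilon$ eventually, giving $T_nx\to Tx$.

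The conceptual moral, which foreshadows the paper's generalization, is that the assumption $\sup_n\|T_n\|<\infty$ is used in the reverse direction only to bound the first term uniformly in $n$, i.e.\ as an equicontinuity condition on the family $(T_n)$. The forward direction, by contrast, genuinely exploits completeness of $X$. Thus the only real obstacle in the Banach-space setting is justifying the uniform bound from pointwise convergence, and this is precisely the classical Banach--Steinhaus theorem.
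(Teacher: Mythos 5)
Your proof is correct and follows exactly the route the paper indicates: the $\epsilon/3$-type splitting via a nearby point of $D$ for (ii)$\Rightarrow$(i), and pointwise boundedness plus the Banach--Steinhaus theorem (using completeness of $X$) for (i)$\Rightarrow$(ii). Your closing remark about the uniform bound acting purely as an equicontinuity condition is also precisely the observation that motivates the paper's main theorem.
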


The implication (ii)$\Rightarrow$(i) is an easy $\epsilon/3$-argument.
The implication (i)$\Rightarrow$(ii) is an application of the 
Banach--Steinhaus theorem; in fact it dates back
to the original paper of Banach and Steinhaus \cite{BS27}.

The completeness of $Y$ is not really needed here,
since we can always embed $Y$ in its completion.
The completeness of $X$, however, is needed for
the Banach--Steinhaus theorem. 
Without it the implication (i)$\Rightarrow$(ii) may actually fail.

\begin{example}\label{X:incomplete}
Let $X=c_{00}$, the space of finitely supported
 sequences of real numbers, with the sup-norm, 
and let $Y=\mathbb{R}$. Let $\pi_n:c_{00}\to \mathbb{R}$ be the 
$n$-th coordinate functional,  let $T_n:=n\pi_n$ and let $T:=0$. 

For each $x\in c_{00}$, we have $T_nx=0$ for all sufficiently large $n$, so (i) holds. 
However, $\|T_n\|=n$ for all $n$, so (ii) fails.
\end{example}

Theorem~\ref{T:norm} has a corollary for weak topologies.

\begin{cor}\label{C:weak}
Let $X$ and $Y$ be Banach spaces, let $(T_n)_{n\ge1}$ and $T$
be continuous linear maps from $X$ to $Y$, and let $D$ be a weakly dense subset of $X$. 
Then the following statements are equivalent:
\begin{enumerate}
\item[(i)] $T_nx\to Tx$ weakly for all $x\in X$;
\item[(ii)] $T_nx\to Tx$ weakly for all $x\in D$ and $\sup_n\|T_n\|<\infty$.
\end{enumerate}
\end{cor}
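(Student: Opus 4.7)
The plan is to reduce the corollary to the same ingredients as Theorem~\ref{T:norm}---namely Banach--Steinhaus and a soft $\epsilon/3$ continuity argument---while using Mazur's theorem to bridge the gap between weak density and norm density of $D$.

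For (i)$\Rightarrow$(ii), any weakly convergent sequence in the Banach space $Y$ is norm bounded, so for each $x\in X$ we have $\sup_n\|T_nx\|<\infty$; the uniform boundedness principle on the Banach space $X$ then yields $\sup_n\|T_n\|<\infty$. The claim that $T_nx\to Tx$ weakly on $D$ is immediate.

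For (ii)$\Rightarrow$(i), I would introduce the set
\[
E:=\{x\in X:T_nx\to Tx\ \text{weakly in}\ Y\},
\]
which is a linear subspace of $X$ containing $D$, and show that $E=X$. I would first check that $E$ is norm closed: if $x_k\to x$ in norm with $x_k\in E$, then for every $y^*\in Y^*$ and every $k$,
\[
|y^*(T_nx-Tx)|\le \|y^*\|\bigl(\sup_m\|T_m\|+\|T\|\bigr)\|x-x_k\|+|y^*(T_nx_k-Tx_k)|,
\]
so choosing $k$ large and then $n$ large gives $y^*(T_nx)\to y^*(Tx)$, whence $x\in E$. Since $E$ is a norm-closed convex set, Mazur's theorem makes it weakly closed, and the weak density of $D$ then forces $E=X$. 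This last step is the decisive one: because $D$ is only assumed weakly dense, Theorem~\ref{T:norm} cannot be invoked directly, and Mazur's theorem is precisely what repairs the gap.
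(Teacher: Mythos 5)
Your proof is correct, and it uses the same three ingredients as the paper---Banach--Steinhaus for (i)$\Rightarrow$(ii), a uniform-bound $\epsilon/3$ estimate, and Mazur's theorem (norm-closed convex sets are weakly closed)---but it packages them differently. The paper fixes $\phi\in Y^*$, notes that weak density of the linear span of $D$ upgrades (via Mazur) to norm density, and then simply invokes Theorem~\ref{T:norm} for the scalar-valued maps $\phi\circ T_n$ and $\phi\circ T$; the weak-to-norm density transfer happens on the side of $D$. You instead apply Mazur on the side of the convergence set: you form $E=\{x: T_nx\to Tx \text{ weakly}\}$, show it is a norm-closed subspace using the bound $\sup_m\|T_m\|<\infty$ (your displayed estimate is the same $\epsilon/3$ computation that underlies Theorem~\ref{T:norm}, just redone with a functional $y^*$ in place of the norm), conclude $E$ is weakly closed, and let the weak density of $D$ force $E=X$. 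The paper's route is shorter because it reuses Theorem~\ref{T:norm} as a black box; yours is self-contained and makes explicit the useful general principle that, under a uniform operator bound, the set of points of (weak) convergence is norm closed, so that any notion of density implying membership in every norm-closed convex superset suffices. Both arguments are complete and correct.
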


\begin{proof}
The implication (i)$\Rightarrow$(ii) holds because, by the Banach--Steinhaus theorem, 
weakly bounded implies norm bounded. 

For  the implication (ii)$\Rightarrow$(i), let $\phi\in Y^*$, the dual of $Y$.
If (ii) holds, then $(\phi\circ T_n)x\to (\phi\circ T)x$  for all $x\in D$, 
hence also  for all $x$ in the linear span of $D$, 
and if the latter is weakly dense then it is also norm dense
(see e.g.\ \cite[Corollary to Theorem~3.12]{Ru91}). 
Thus  we may apply Theorem~\ref{T:norm} 
to deduce that $(\phi\circ T_n)x\to (\phi\circ  T)x$ for all $x\in X$.
As this holds for each $\phi\in Y^*$, we conclude that (i) holds.
\end{proof}

If $X$ and $Y$ happen to be dual spaces, then we may also ask
whether the analogue of Corollary~\ref{C:weak} holds for weak* topologies. 
This problem arose recently in \cite{GMR22} in the context of summability operators.
It turns out that the answer is negative. 
This time, interestingly, it is the implication (ii)$\Rightarrow$(i) that breaks down.

\begin{example}\label{X:weak*}
Let $X=\ell^\infty$, the space of bounded sequences, normed by the
sup-norm, and let $Y=\mathbb{R}$. 
Let $\pi_n:\ell^\infty\to\mathbb{R}$ be the $n$-th coordinate functional,  let $T_n:=\pi_n$ and let $T:=0$.
Let $D=c_0$, the subspace of $\ell^\infty$ consisting of sequences that tend to zero. 

Since the bidual of $c_0$ is $\ell^\infty$, it follows that $c_0$ is weak* dense in $\ell^\infty$
(see e.g.\ \cite[Chapter~4, Exercise~1]{Ru91}). 
Also $T_nx\to0$
for all $x\in c_0$ and $\sup_n\|T_n\|=1<\infty$, so (ii) holds.
However, if $x$ is the constant sequence $(1,1,\dots)$, then
$x\in\ell^\infty$ and $T_nx\not\to0$, so (i) fails.
\end{example}

One might reasonably argue that, to obtain a true weak* version of
Corollary~\ref{C:weak}, one should also replace the condition 
$\sup_n\|T_n\|<\infty$ by one that is more closely tied to the weak* topologies on $X$ and $Y$. 
A natural candidate is that the sequence $(T_n)$  be weak* equicontinuous, i.e., that for each weak* $0$-neighbourhood $V$ in $Y$, there exists a weak* $0$-neighbourhood $U$ in $X$ 
such that $T_n(U)\subset V$ for all $n$. With this change, it is  true that (ii) implies (i) (for the weak* topologies). However, as the following example shows, we then lose the implication (i)$\Rightarrow$(ii).

\begin{example}\label{X:equicts}
Let $X=\ell^2$, with the usual $\ell^2$-norm, and let $Y=\mathbb{R}$.
Let $\pi_n:\ell^2\to\mathbb{R}$ be the $n$-th coordinate functional, let 
$T_n:=\pi_n$ and let $T:=0$. 

For each $x\in \ell^2$, we have $T_nx\to0$ in $\mathbb{R}$, so (i) holds.
However, if $U$ is any weak* $0$-neighbourhood in $\ell^2$,
then $U$ contains a non-zero subspace of $\ell^2$
(see e.g.\ \cite[p.66]{Ru91}),
and it follows easily that $T_n(U)=\mathbb{R}$ for all $n$.
We conclude that  the sequence $(T_n)$ is not weak* equicontinuous, and so (ii) fails
in this setting.
\end{example}

In the article \cite{GMR22},  these difficulties were circumvented by exploiting
the structure of the particular operators involved.
But for general operators, the problem remains.
Our purpose in this article is to propose a solution,
by replacing the condition
$\sup_n\|T_n\|<\infty$ in Theorem~\ref{T:norm} 
with an appropriate condition
so that the equivalence (i)$\iff$(ii) holds for weak* topologies, 
and indeed for arbitrary topological vector spaces.

%%%%%%%%%%%%%%%%%%%%%%%%%%

\section{Asymptotic equicontinuity}

Given a set $X$ and a sequence $(F_n)$ of subsets of $X$,
we write $\liminf_n F_n$  for the set of $x\in X$ that
belong to $F_n$ for all but finitely many $n$.

\begin{defn}
Let $X,Y$ be topological vector spaces,
and let $(T_n)$ be a sequence of continuous linear maps from $X$ to $Y$.
We say that $(T_n)$ is \emph{asymptotically equicontinuous} if,
for each $0$-neighbourhood $V$ in $Y$, the set 
$\liminf_nT_n^{-1}(V)$ is a $0$-neighbourhood in $X$.
\end{defn}

Let us spell this out explicitly: $(T_n)$ is asymptotically equicontinuous if,
for each $0$-neighbourhood $V$ in $Y$, 
there exists a $0$-neighbourhood $U$ in $X$ such that, whenever $x\in U$, then $T_nx\in V$ for all large enough $n$.

Clearly, if $(T_n)$ is equicontinuous with respect to $X$ and $Y$, 
then it is asymptotically equicontinuous. 

The converse is true if $X$ and $Y$ are Banach spaces.
Indeed, in this case, 
$(T_n)$ asymptotically equicontinuous implies that 
$\sup_{n}\|T_nx\|<\infty$ for each $x\in X$, 
which in turn implies that $\sup_{n}\|T_n\|<\infty$
by the Banach--Steinhaus theorem,
whence $(T_n)$ is equicontinuous.

However,  in general, asymptotically equicontinuous does not imply equicontinuous. 
For example, the sequence $(T_n)$ in Example~\ref{X:incomplete}, 
being unbounded in norm, is not equicontinuous. 
However it is asymptotically equicontinuous: this follows from Theorem~\ref{T:main} below, 
but it is also easy to verify directly.

We can now state our main result.

\begin{thm}\label{T:main}
Let $X$ and $Y$ be topological vector spaces,
let $(T_n)_{n\ge1}$ and $T$ be continuous linear maps from $X$ to $Y$,
and let $D$ be a dense subset of~$X$.
Then the following statements are equivalent:
\begin{enumerate}
\item[(i)] $T_n x\to Tx$ for all $x\in X$;
\item[(ii)] $T_n x\to Tx$ for all $x\in D$, and the sequence $(T_n)$
is asymptotically equicontinuous.
\end{enumerate}
\end{thm}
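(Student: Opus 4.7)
The plan is to prove each implication directly. Both directions reduce to standard topological vector space manipulations with balanced $0$-neighbourhoods, and neither requires any completeness hypothesis on $X$ or $Y$.

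For (i)$\Rightarrow$(ii), the first half of (ii) is immediate since $D\subset X$. To establish asymptotic equicontinuity, I would fix a $0$-neighbourhood $V$ in $Y$, choose a balanced $0$-neighbourhood $W$ with $W+W\subset V$, and set $U:=T^{-1}(W)$, which is a $0$-neighbourhood by continuity of $T$. For any $x\in U$ one has $Tx\in W$; by (i) the difference $T_nx-Tx$ lies in $W$ for all sufficiently large $n$, hence $T_nx\in W+W\subset V$ eventually. This shows $U\subset\liminf_n T_n^{-1}(V)$, so $\liminf_n T_n^{-1}(V)$ is a $0$-neighbourhood, which is exactly asymptotic equicontinuity.

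For (ii)$\Rightarrow$(i), I plan a standard $\epsilon/3$-style argument adapted to the TVS setting. Fix $x\in X$ and a $0$-neighbourhood $V$ in $Y$, and pick a balanced $0$-neighbourhood $W$ with $W+W+W\subset V$. Using asymptotic equicontinuity, choose a $0$-neighbourhood $U_1$ with $U_1\subset\liminf_n T_n^{-1}(W)$, and set $U:=U_1\cap T^{-1}(W)$, which, after passing to a balanced sub-neighbourhood, is still a $0$-neighbourhood on which both $T(U)\subset W$ and the asymptotic control on the $T_n$ hold. By density of $D$, there exists $d\in D$ with $x-d\in U$. For all sufficiently large $n$, each of the three terms $T_n(x-d)$, $T_nd-Td$, and $T(d-x)$ then belongs to $W$, by asymptotic equicontinuity, by the pointwise convergence hypothesis on $D$, and by continuity of $T$ respectively. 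The identity
\[
T_nx-Tx = T_n(x-d) + (T_nd-Td) + T(d-x)
\]
then yields $T_nx-Tx\in W+W+W\subset V$ eventually, so $T_nx\to Tx$.

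The only real subtleties are the usual TVS bookkeeping---producing a balanced $0$-neighbourhood $W$ whose iterated sum sits inside a given $V$, and recalling that every $0$-neighbourhood contains a balanced one---together with the observation that continuity of the limit operator $T$ itself is used essentially in both implications, serving as the bridge that transfers control between $Tx$ and $T_nx$. I do not anticipate any other obstacle, and in particular the hypothesis of completeness that was needed in Theorem~\ref{T:norm} plays no role here, which is precisely the point of introducing asymptotic equicontinuity.
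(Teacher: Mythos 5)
Your proof is correct, but it is organized differently from the paper's. The paper routes both implications through Lemma~\ref{L:main} (sums of asymptotically equicontinuous sequences): for (i)$\Rightarrow$(ii) it notes that $(T_n-T)$ is trivially asymptotically equicontinuous (its $\liminf$ sets are all of $X$) and adds the constant sequence $(T)$; for (ii)$\Rightarrow$(i) it first reduces to $R_n:=T_n-T$, again via the lemma, and then needs only a two-term decomposition $R_nx=R_nx'-R_n(x'-x)$ with a neighbourhood $W$ satisfying $W-W\subset V$, which sidesteps any balancedness or symmetry considerations. You instead avoid the lemma entirely: your (i)$\Rightarrow$(ii) takes $U=T^{-1}(W)$ directly (which is in effect an unfolding of the lemma's proof in this special case), and your (ii)$\Rightarrow$(i) is the classical $\epsilon/3$-style three-term split $T_nx-Tx=T_n(x-d)+(T_nd-Td)+T(d-x)$, which forces you to choose $W$ with $W+W+W\subset V$ and to pass to a balanced (or at least symmetric) $U$ so that $x-d\in U$ and $T(d-x)\in W$ both make sense --- bookkeeping you correctly flag and which does go through in any topological vector space. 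Both arguments are sound; the paper's factorization buys a cleaner neighbourhood arithmetic and a lemma that is reused verbatim for the net version in the concluding section, while your version is self-contained and arguably closer to the familiar Banach-space proof of Theorem~\ref{T:norm}.
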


For the proof of Theorem~\ref{T:main}, we require a lemma.

\begin{lem}\label{L:main}
If $(S_n)$ and $(T_n)$ are asymptotically equicontinuous sequences
of linear maps from $X$ to $Y$, then so is $(S_n+T_n)$.
\end{lem}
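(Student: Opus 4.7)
The plan is to reduce to the standard topological-vector-space fact that, given any $0$-neighbourhood $V$ in $Y$, there exists a $0$-neighbourhood $W$ in $Y$ with $W+W\subset V$. Given $V$, I would choose such a $W$, and then appeal to asymptotic equicontinuity of $(S_n)$ and $(T_n)$ separately, applied to $W$, to obtain that both
\[
U_1:=\liminf_n S_n^{-1}(W) \quad\text{and}\quad U_2:=\liminf_n T_n^{-1}(W)
\]
are $0$-neighbourhoods in $X$. Their intersection $U_1\cap U_2$ is then also a $0$-neighbourhood in $X$.

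The next step is to verify the inclusion $U_1\cap U_2 \subset \liminf_n(S_n+T_n)^{-1}(V)$. If $x\in U_1\cap U_2$, then by definition of $\liminf$, there is a threshold $N_1$ past which $S_nx\in W$ and a threshold $N_2$ past which $T_nx\in W$; for $n\ge\max(N_1,N_2)$ both hold simultaneously, so $(S_n+T_n)x=S_nx+T_nx\in W+W\subset V$. Thus $x$ lies in $(S_n+T_n)^{-1}(V)$ for all sufficiently large $n$, which is exactly membership in $\liminf_n(S_n+T_n)^{-1}(V)$. Since this set contains the $0$-neighbourhood $U_1\cap U_2$, it is itself a $0$-neighbourhood in $X$, proving the asymptotic equicontinuity of $(S_n+T_n)$.

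I do not anticipate a genuine obstacle. The only subtlety worth flagging is that the definition of $\liminf_n F_n$ requires membership for \emph{all but finitely many} $n$, not for all $n$; this is precisely what allows the two thresholds $N_1,N_2$ coming from $(S_n)$ and $(T_n)$ to be combined into a single threshold $\max(N_1,N_2)$. If the definition had instead demanded membership for all $n$, the argument would break. The other point to keep in mind is that the passage from $V$ to $W$ with $W+W\subset V$ uses only continuity of addition in $Y$, which is part of the TVS axioms, so no hypothesis beyond those already in the lemma is needed.
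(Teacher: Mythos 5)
Your proof is correct and follows essentially the same route as the paper: choose $W$ with $W+W\subset V$, intersect the two $0$-neighbourhoods $\liminf_n S_n^{-1}(W)$ and $\liminf_n T_n^{-1}(W)$, and combine the two thresholds to show the intersection lies in $\liminf_n(S_n+T_n)^{-1}(V)$. No changes needed.
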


\begin{proof}
Let $V$ be a $0$-neighbourhood in $Y$. 
Let $W$ be a $0$-neighbourhood in $Y$ such that $W+W\subset V$.
As $(S_n)$ and  $(T_n)$ are asymptotically equicontinuous sequences,
the sets  
$U_1:=\liminf S_n^{-1}(W)$ and $U_2:=\liminf T_n^{-1}(W)$ are $0$-neighbourhoods in $X$.
Set $U:=U_1\cap U_2$. 
Then $U$ is a $0$-neighbourhood in $X$ and, if $x\in U$, 
then, for all large enough $n$, we have $S_nx\in W$ and $T_nx\in W$, 
whence $(S_n+T_n)x\in W+W\subset V$. 
This shows that $U\subset \liminf(S_n+T_n)^{-1}(V)$.
\end{proof}

\begin{proof}[Proof of Theorem~\ref{T:main}]
(i)$\Rightarrow$(ii): Suppose that $T_nx\to Tx$ for all $x\in X$. 
Obviously this holds, in particular, for all $x\in D$.
Also, for each $0$-neighbourhood $V$ in $Y$, 
we have $\liminf (T_n-T)^{-1}(V)=X$, simply by the definition of convergence of $T_nx$ to $Tx$. 
Therefore the sequence $(T_n-T)$ is asymptotically equicontinuous. 
Obviously the constant sequence $(T)$ is asymptotically equicontinuous, 
so, by the lemma, $(T_n)$ is asymptotically equicontinuous.

\medskip
(ii)$\Rightarrow$(i): Suppose that the hypotheses in (ii) hold.
Set $R_n:=T_n-T$. Then $R_nx\to0$ for all $x\in D$, and by the lemma,
the sequence $(R_n)$ is asymptotically equicontinuous.
We need to show that $R_nx\to0$ for all $x\in X$.

Let $x\in X$ and   $V$ be a $0$-neighbourhood in $Y$. 
We shall prove  that $R_n x\in V$ for all large enough $n$.
We may choose another $0$-neighbourhood $W$ in $Y$ such that $W-W\subset V$. 
Since $(R_n)$ is asymptotically equicontinuous, $U:=\liminf R_n^{-1}(W)$ 
is a $0$-neighbourhood in~$X$.
Since $D$ is dense in $X$, there exists $x'\in D$ such that $x'\in x+U$. 
Since $x'-x\in U$, there exists $N$ such that
\[
n\ge N\quad\Rightarrow\quad R_n(x'-x)\in W.
\]
Also, since $x'\in D$, we have $R_nx'\to0$, so there exists $N'$ such that
\[
n\ge N'\quad\Rightarrow\quad R_n x'\in W.
\]
Hence, finally, 
\[
n\ge \max(N,N')\quad\Rightarrow\quad R_n x=R_n(x')-R_n(x'-x)\in W-W\subset V.
\]
This completes the proof.
\end{proof}

%%%%%%%%%%%%%%%%%%%%%%%%%%

\section{Concluding remarks}

We have formulated the notion of asymptotic continuity 
for sequences of operators.
However, given that our main result, Theorem~\ref{T:main},
treats topological vector spaces that are not necessarily metrizable, 
it would perhaps be more logical to define asymptotic continuity for nets rather than sequences.
In this section, we discuss the (relatively minor) changes to the preceding section needed to achieve this.

Let $X$ be a set, and let $(F_\alpha)_{\alpha\in A}$ be a net
of subsets of $X$, i.e., a collection of subsets indexed by a directed set $A$. We write $\liminf_\alpha F_\alpha$ for the set of $x\in X$ with the following property: there exists $\alpha_0\in A$ (depending on $x$) such that
$x\in F_\alpha$ for all $\alpha\ge \alpha_0$.

\begin{defn}
Let $X,Y$ be topological vector spaces, and let $(T_\alpha)$
be a net of continuous linear maps from $X$ to $Y$. We say that 
$(T_\alpha)$ is \emph{asymptotically equicontinuous} if,
for each $0$-neighbourhood $V$ in $Y$, the set 
$\liminf_\alpha T_\alpha^{-1}(V)$ is a $0$-neighbourhood in $X$.
\end{defn}

The following results are the extensions of Lemma~\ref{L:main} and Theorem~\ref{T:main} to nets. The proofs are obtained by making the  obvious modifications to the arguments for sequences. We omit the details.

\begin{lem}
If $(S_\alpha)$ and $(T_\alpha)$ are asymptotically equicontinuous nets
of linear maps from $X$ to $Y$, indexed by the same directed set, then  $(S_\alpha+T_\alpha)$ is also an asymptotically equicontinuous net.
\end{lem}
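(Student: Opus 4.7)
The plan is to adapt the proof of Lemma~\ref{L:main} essentially verbatim, with the only substantive change being how one handles the word ``eventually'' once sequences are replaced by nets. I fix an arbitrary $0$-neighbourhood $V$ in $Y$, and use continuity of the addition map $Y\times Y\to Y$ at the origin to choose a $0$-neighbourhood $W$ in $Y$ with $W+W\subset V$.

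Next I apply the hypothesis to each of the two nets separately: the sets
\[
U_1:=\liminf_\alpha S_\alpha^{-1}(W) \quad\text{and}\quad U_2:=\liminf_\alpha T_\alpha^{-1}(W)
\]
are $0$-neighbourhoods in $X$, so $U:=U_1\cap U_2$ is a $0$-neighbourhood in $X$ as well. The goal is then to show $U\subset\liminf_\alpha(S_\alpha+T_\alpha)^{-1}(V)$. Fix $x\in U$. By definition of $\liminf$ for nets, membership in $U_1$ gives some $\alpha_1\in A$ with $S_\alpha x\in W$ for all $\alpha\ge\alpha_1$, and membership in $U_2$ gives some $\alpha_2\in A$ with $T_\alpha x\in W$ for all $\alpha\ge\alpha_2$.

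Here is the only place where the argument genuinely differs from the sequence case, and it is also the only candidate for a ``main obstacle''---which is really no obstacle at all. Since $A$ is directed, there exists $\alpha_0\in A$ with $\alpha_0\ge\alpha_1$ and $\alpha_0\ge\alpha_2$. For every $\alpha\ge\alpha_0$ both $S_\alpha x\in W$ and $T_\alpha x\in W$, hence $(S_\alpha+T_\alpha)x\in W+W\subset V$. Thus $x\in\liminf_\alpha(S_\alpha+T_\alpha)^{-1}(V)$, as required. In short, the directed-set axiom takes over the role that the linear order of $\mathbb{N}$ played in the proof of Lemma~\ref{L:main}, and no further modification is needed.
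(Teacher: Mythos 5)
Your proof is correct and is precisely the ``obvious modification'' of the proof of Lemma~\ref{L:main} that the paper alludes to when it omits the details: the same choice of $W$ with $W+W\subset V$, the same intersection $U_1\cap U_2$, with directedness of $A$ supplying a common $\alpha_0$ in place of $\max(N_1,N_2)$.
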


\begin{thm}
Let $X$ and $Y$ be topological vector spaces,
let $(T_\alpha)$ be a net of  continuous linear maps from $X$ to $Y$,
let $T:X\to Y$ be another continuous linear map,
and let $D$ be a dense subset of~$X$.
Then the following statements are equivalent:
\begin{enumerate}
\item[(i)] $T_\alpha x\to Tx$ for all $x\in X$;
\item[(ii)] $T_\alpha x\to Tx$ for all $x\in D$, and the net $(T_\alpha)$
is asymptotically equicontinuous.
\end{enumerate}
\end{thm}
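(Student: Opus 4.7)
The plan is to adapt, line by line, the proofs of Lemma~\ref{L:main} and Theorem~\ref{T:main}, with the only substantive change being that whenever the sequence arguments invoke ``for all large enough $n$'' to combine two tail conditions, I will use the directedness of the index set $A$ to produce a single index dominating both cutoffs.

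First I would establish the net version of Lemma~\ref{L:main}, which is needed in both directions of the theorem. Given a $0$-neighbourhood $V$ in $Y$, choose a $0$-neighbourhood $W$ with $W+W\subset V$, and set $U_1:=\liminf_\alpha S_\alpha^{-1}(W)$, $U_2:=\liminf_\alpha T_\alpha^{-1}(W)$, both $0$-neighbourhoods in $X$ by asymptotic equicontinuity. For $x\in U:=U_1\cap U_2$ there exist $\alpha_1,\alpha_2\in A$ with $S_\alpha x\in W$ for $\alpha\ge\alpha_1$ and $T_\alpha x\in W$ for $\alpha\ge\alpha_2$; directedness of $A$ provides some $\alpha_0\in A$ with $\alpha_0\ge\alpha_1$ and $\alpha_0\ge\alpha_2$, so $(S_\alpha+T_\alpha)x\in W+W\subset V$ for all $\alpha\ge\alpha_0$. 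This shows $U\subset\liminf_\alpha(S_\alpha+T_\alpha)^{-1}(V)$.

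For (i)$\Rightarrow$(ii) of the theorem, the definition of net convergence $T_\alpha x\to Tx$ says that for every $0$-neighbourhood $V$ and every $x\in X$ there is some $\alpha_0$ with $(T_\alpha-T)x\in V$ for $\alpha\ge\alpha_0$; equivalently $\liminf_\alpha(T_\alpha-T)^{-1}(V)=X$. Thus $(T_\alpha-T)$ is (trivially) asymptotically equicontinuous, the constant net $(T)$ clearly is, and the lemma yields that $(T_\alpha)=(T_\alpha-T)+(T)$ is asymptotically equicontinuous. For (ii)$\Rightarrow$(i), set $R_\alpha:=T_\alpha-T$; by the lemma $(R_\alpha)$ is asymptotically equicontinuous, and $R_\alpha x\to0$ on $D$. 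Given $x\in X$ and a $0$-neighbourhood $V$ in $Y$, choose $W$ with $W-W\subset V$; then $U:=\liminf_\alpha R_\alpha^{-1}(W)$ is a $0$-neighbourhood, so by density of $D$ we can pick $x'\in D$ with $x'-x\in U$. This gives some $\alpha_1$ with $R_\alpha(x'-x)\in W$ for $\alpha\ge\alpha_1$, and some $\alpha_2$ with $R_\alpha x'\in W$ for $\alpha\ge\alpha_2$; by directedness there is $\alpha_0\ge\alpha_1,\alpha_2$, and for every $\alpha\ge\alpha_0$ we have $R_\alpha x=R_\alpha x'-R_\alpha(x'-x)\in W-W\subset V$, which is exactly $T_\alpha x\to Tx$.

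The only ``obstacle'' here is bookkeeping: every occurrence of ``for all sufficiently large $n$'' in the original proofs encodes two things at once, namely a tail condition and the fact that the intersection of two tails is a tail. For sequences this is automatic ($\max(N,N')$), while for nets one must each time cite the directedness of $A$ to obtain a common dominating index. No new ideas beyond those already present in the proofs for sequences are required.
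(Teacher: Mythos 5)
Your proposal is correct and is exactly what the paper intends: the authors state that the net versions follow by making the obvious modifications to the sequence arguments, and your write-up simply carries those modifications out, replacing $\max(N,N')$ by a common dominating index obtained from the directedness of the index set. No further comment is needed.
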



\begin{thebibliography}{9}


\bibitem{BS27}
S. Banach, H. Steinhaus,
Sur le principe de la condensation de singularit\'es,
\emph{Fund. Math.} 9 (1927), 50--61.

\bibitem{GMR22}
S. Ghara, J. Mashreghi, T. Ransford,
Summability and duality,
\emph{Publ. Mat.}, to appear.

\bibitem{Ru91}
W. Rudin,
\emph{Functional Analysis}, 2nd ed.,
McGraw--Hill, New York, 1991.


\end{thebibliography}
\end{document}